\numberwithin{equation}{section}  \makeatletter\@addtoreset{equation}{section}
			\newtheorem{theorem}{Theorem}[section]
			\newtheorem{proposition}[theorem]{Proposition}
			\newtheorem{remark}[theorem]{Remark}
\newcommand{\norm}[1]{\left\Vert#1\right\Vert}  \newcommand{\scal}[1]{\left<#1\right>}
\newcommand{\R}{\mathbb{R}}    \newcommand{\C}{\mathbb{C}}
     \newcommand{\bz}{\overline{z}} \newcommand{\bw}{\overline{w}}
    \newcommand{\bxi}{\overline{\xi}}
    \newcommand{\bzeta}{\overline{\zeta}}  
     \newcommand{\magn}{\nu} 
\begin{document}

\title[Dual of $2$D FrFT associated to It\^o--Hermite polynomials]{Dual of $2$D fractional Fourier transform associated to It\^o--Hermite polynomials 
}

 \dedicatory{ \textit{Dedicated to the memory of Professor Elhachmia Ait Benhaddou}}

\author{Abdelhadi Benahmadi} \email{abdelhadi.benahmadi@gmail.com}
\author{Allal Ghanmi}  \email{allalghanmi@um5.ac.ma}
\address{ Analysis, P.D.E. $\&$ Spectral Geometry,
	Lab M.I.A.-S.I., CeReMAR, Department of Mathematics,
	P.O. Box 1014,  Faculty of Sciences,\newline
	Mohammed V University in Rabat,
	Morocco}

      \subjclass[2010]{Primary 44A20; 30G35;  30H20  Secondary  47B38; 30D55.}
      
      
      \keywords{Weighted Bergman space on bi-disk; It\^o--Hermite polynomials;  Mehler formula; Singular values; $p$-Schatten class; Segal--Bargmann transform}
      
      \date{}

\begin{abstract}
A class of integral transforms, on the planar Gaussian Hilbert space with range in the weighted Bergman space on the bi-disk, is defined as the dual transforms of the $2$d fractional Fourier transform associated with the Mehler function for It\^o--Hermite polynomials. Some spectral properties of these transforms are investigated. Namely, we study their boundedness and identify their null spaces as well as their ranges. Such identification depends on the zeros set of It\^o--Hermite polynomials. Moreover, the explicit expressions of their singular values are given and compactness and membership in $p$-Schatten class are studied. The relationship to specific fractional Hankel transforms is also established  
\end{abstract}

\maketitle

\section{Introduction } \label{s1}

The role played by the classical Mehler formula \cite{Mehler1866},
\begin{align}\label{MehlerkernelHn}
\sum\limits_{n=0}^{+\infty} \frac{ t^n H_{n}(x) H_{n}(y)}{2^n n!}
=\frac{1}{\sqrt{1 - t^2}}  \exp\left( \frac{- t^2 (x^2 + y^2) + 2 t x y  }{1 - t^2}  \right) ,
\end{align}
for the real Hermite polynomials $H_{n}(x) : = (-1)^n e^{x^2} \frac{d^n}{dx^n}(e^{-x^2}),$
is well known in the literature \cite{Mehler1866,Wiener 1929,Condon1937,Kibble1945,Slepian1972,SrivastavaSinghal1972,Louck1981,Hormander1995,Stanton2000}. Its complex analogues for It\^o--Hermite polynomials $H_{m,n}^\nu$ have been obtained in \cite{Wunsche1999,ChenLiu2014,IsmailTrans2016,Gh2017Mehler} and have been employed in \cite{Gh2017Mehler} to establish integral reproducing property for $H^\nu_{m,n}$ by a like Fourier transform,
and to provide a closed expression of the heat kernel for the Cauchy initial value problem
for a special magnetic Laplacian 
acting on the Hilbert space $L^{2}_\nu(\C) := L^{2,\magn }(\C; e^{-\magn |z|^2}d\lambda)$. In \cite{Zayed2018}, Zayed has used the one in  \eqref{Mehler20closed} below to construct a non trivial $2$d fractional Fourier transform  
 \begin{align}\label{2dFrFTZ}
 \mathcal{F}^{\nu}_{u,v}\psi (\xi)
 = \int_{\C} \psi(\zeta) K^\nu_{u,v}( \zeta ; \xi) e^{-\nu |\zeta|^2} dxdy; \,\, \zeta=x+iy  ,
 \end{align}
 whose eigenfunctions are the It\^o--Hermite polynomials.
 Here $K^\nu_{u,v}( \zeta ; \xi )$ stands for the kernel function  
 \begin{align} \label{2dFrFTKernel2}
 K^\nu_{u,v}( \zeta ; \xi )
 &= \frac{\magn}{\pi(1-uv)}  \exp\left(\frac{\nu}{1-uv}  \left\{- uv (|\zeta|^2+|\xi|^2)  +  u \bzeta\xi +  v \zeta \bxi\right\} \right).
 \end{align}

 In the present paper, we explore further applications in the context of integral transforms and weighted Bergman spaces $ B^2_{\alpha,\beta} (D^2)$ on the bi-disk $D^2=D\times D$; $D=\{z\in \C, z\bz <1\}$, defined 
 as the Hilbert space of all analytic functions on $D^2$ that are square integrable with respect to the measure 
  \begin{align}
d\mu_{\alpha,\beta} (z,w)=  \omega_{\alpha,\beta}(|z|^2,|w|^2) d\lambda(z,w) ,
 \end{align}
 where the weight function is given by 
   \begin{align}
  \omega_{\alpha,\beta}(s,t) := (1-s)^{\alpha}(1-t)^{\beta } ; \, \alpha, \beta >-1,
 \end{align}
 and $d\lambda$ denotes the standard Lebesgue measure. To this end we follow the scheme already applied in \cite{GhDual2020} to introduce and study the dual transforms of fractional Hankel transforms with ranges in weighted Bergman space on the disk.
 Mainly, we consider the family of integral transforms  
 \begin{align}\label{TransR}
 \mathcal{R}_w^{\nu}f(u,v)=\int_{\C}f(z)K^\nu_{u,v}( z ; w )e^{-\nu |z|^2 }d\lambda(z) 
 \end{align}
 on $D^2$, labeled by $\nu>0$ and $w\in \C$ and seen as the dual transform of the $2$d fractional Fourier transform in \eqref{2dFrFTZ}, $ \mathcal{R}_w^{\nu}f(u,v) =  \mathcal{F}^{\nu}_{u,v}f (w)$.

The aim in this paper concern identification of the null space and the range of the transforms $\mathcal{R}_w^\nu$. We also study their boundedness and provide complete description of their compactness and membership in $p$-Schatten class. Our main results can be stated as follows

\begin{theorem}\label{MThm1}
	The integral transform $\mathcal{R}_w^{\nu}$ is well defined and bounded from $L^2_{\nu}(\C)$ into the weighted Bergman space $B^2_{\alpha,\beta} (D^2)$ if and only if $\alpha>0$ and $\beta>0$. The characterization of its null space $Ker(\mathcal{R}_w^{\nu})$ depends on the zeros set of It\^o--Hermite polynomials $H^{\nu}_{m,n}$. Namely, if $ N_{w}(H) = \{(m,n); m,n=0,1,2, \cdots ; \, H^\nu_{m,n}(w,\bw)=0\} $, then $Ker(\mathcal{R}_w^{\nu})$ is a vector space spanned as
	$$Ker(\mathcal{R}_w^{\nu}) =  Span\{H^\nu_{m,n}; \, (m,n)\in N_{w}(H) \}.$$
\end{theorem}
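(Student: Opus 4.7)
The plan is to diagonalize $\mathcal{R}_w^{\magn}$ in the orthogonal basis of It\^o--Hermite polynomials $\{H^\magn_{m,n}\}_{m,n\ge 0}$ of $\Lnuc$, thereby reducing boundedness and the kernel description to sequence estimates. Since the kernel $K^\magn_{u,v}(\zeta;\xi)$ in \eqref{2dFrFTKernel2} is precisely the closed form of the $2$d Mehler identity for $H^\magn_{m,n}$, it expands as a generating series in $u,v$ whose coefficients are the $H^\magn_{m,n}(\zeta,\bar{\zeta})\overline{H^\magn_{m,n}(\xi,\bar{\xi})}$ up to normalization. Combined with the $\Lnuc$--orthogonality of the $H^\magn_{m,n}$, this yields at once
\[
\mathcal{R}_w^{\magn} H^\magn_{m,n}(u,v) = u^m v^n\, H^\magn_{m,n}(w,\bw),
\]
and hence for every $f=\sum_{m,n} c_{m,n} H^\magn_{m,n} \in \Lnuc$,
\[
\mathcal{R}_w^{\magn} f(u,v) = \sum_{m,n\ge 0} c_{m,n}\, H^\magn_{m,n}(w,\bw)\, u^m v^n
\]
as an analytic function on $D^2$ whenever the series converges.

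Next I would compute the $B^2_{\alpha,\beta}(D^2)$ squared norm of the monomial basis by a routine polar--Beta integration,
\[
\|u^m v^n\|^2_{B^2_{\alpha,\beta}} = \pi^2 \frac{m!\,n!\,\Gamma(\alpha+1)\Gamma(\beta+1)}{\Gamma(m+\alpha+2)\Gamma(n+\beta+2)},
\]
and compare with $\|H^\magn_{m,n}\|^2_{\Lnuc}=\pi\, m!\,n!\,\magn^{-m-n-1}$. By Parseval on both sides, boundedness of $\mathcal{R}_w^{\magn}\colon \Lnuc \to B^2_{\alpha,\beta}(D^2)$ is equivalent to
\[
\sup_{m,n\ge 0} \frac{\magn^{m+n+1}\,|H^\magn_{m,n}(w,\bw)|^2}{\Gamma(m+\alpha+2)\Gamma(n+\beta+2)} < +\infty.
\]
By Stirling the denominator grows like $m!\,n!\, m^{\alpha+1}n^{\beta+1}$, while available uniform estimates on It\^o--Hermite polynomials (either via their explicit truncated hypergeometric expression, or by specializing the Mehler formula as $t\to 1^-$) show that $\magn^{m+n}|H^\magn_{m,n}(w,\bw)|^2$ is of order $m!\,n!$ in $(m,n)$, pinning the threshold at $\alpha,\beta>0$.

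The null-space description then follows at once from the series representation. Indeed, $\mathcal{R}_w^{\magn} f\equiv 0$ on $D^2$ forces $c_{m,n} H^\magn_{m,n}(w,\bw)=0$ for all $(m,n)$ by linear independence of the monomials $u^m v^n$; equivalently, $c_{m,n}=0$ for every $(m,n)\notin N_w(H)$. Hence $f$ lies in the $\Lnuc$--closure of $\mathrm{Span}\{H^\magn_{m,n};(m,n)\in N_w(H)\}$, and the converse inclusion is immediate from $\mathcal{R}_w^{\magn} H^\magn_{m,n}\equiv 0$ for $(m,n)\in N_w(H)$.

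I expect the main obstacle to be the sharpness of the boundedness threshold: proving that the supremum above diverges at $\alpha=0$ or $\beta=0$ requires a lower bound on $|H^\magn_{m,n}(w,\bw)|$ along some subsequence, which is delicate because these polynomials may vanish at $w$ for many index pairs. One must extract a subsequence in the complement of $N_w(H)$ and control the polynomial from below via its asymptotic expansion, ruling out accidental cancellations at the specific point $w$.
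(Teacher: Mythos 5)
Your diagonalization $\mathcal{R}_w^{\nu}\psi^\nu_{m,n}=\psi^\nu_{m,n}(w)\,e_{m,n}$ and the resulting Parseval identity are exactly the paper's mechanism (its identities \eqref{ExprRf}--\eqref{normRf}), and your null-space argument coincides with the paper's Proposition \ref{propnull}; that part is correct, and you are in fact more careful than the paper in taking the closure of the span when $N_w(H)$ is infinite. The genuine gap is in the boundedness threshold. First, a normalization slip: $\norm{H^\nu_{m,n}}^2_{L^2_\nu(\C)}=\pi\,m!\,n!\,\nu^{m+n-1}$, not $\pi\,m!\,n!\,\nu^{-m-n-1}$, so your criterion should read $\sup_{m,n}|H^\nu_{m,n}(w,\bw)|^2\big/\bigl(\nu^{m+n}\Gamma(m+\alpha+2)\Gamma(n+\beta+2)\bigr)<\infty$ up to constants. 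More seriously, the asserted two-sided estimate ``$\nu^{m+n}|H^\nu_{m,n}(w,\bw)|^2$ is of order $m!\,n!$'' is unsubstantiated, and its lower half is false in general: the only uniform estimate available (the one the paper itself invokes in Section 4) is the upper bound $|\psi^\nu_{m,n}(w)|^2\le C e^{\nu|w|^2}$, i.e. $|H^\nu_{m,n}(w,\bw)|^2\le C\,\nu^{m+n}m!\,n!\,e^{\nu|w|^2}$. Feeding this into your exact criterion gives $\sup_{m,n}|\psi^\nu_{m,n}(w)|^2\gamma^{\alpha,\beta}_{m,n}\le Ce^{\nu|w|^2}\sup_{m,n}\gamma^{\alpha,\beta}_{m,n}<\infty$ for \emph{every} $\alpha,\beta>-1$, since $\gamma^{\alpha,\beta}_{m,n}\asymp m^{-\alpha-1}n^{-\beta-1}$ stays bounded on that whole range. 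So your own (sharp) criterion does not pin the threshold at $\alpha,\beta>0$; proving the ``only if'' would require a lower bound on $|\psi^\nu_{m,n}(w)|$ along a subsequence forcing divergence of the product, and none is offered. You correctly flag this as the obstacle, but it is not a technicality to be smoothed over later --- it is the entire content of the ``only if'' claim.

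For comparison, the paper does not close this gap either: its Proposition \ref{propBound} proves only sufficiency, by integrating the pointwise Cauchy--Schwarz bound $|\mathcal{R}_w^{\nu}f(u,v)|^2\le K^{\nu}_{|u|^2,|v|^2}(w;w)\norm{f}^2_{L^2_\nu(\C)}$ over $D^2$ and reducing to the finiteness of $\int_0^1\int_0^1\omega_{\alpha-1,\beta-1}\,ds\,dt$; divergence of that \emph{upper} bound for $\alpha\le0$ or $\beta\le0$ says nothing about the operator norm. Your Parseval route is sharper than the paper's and computes the operator norm exactly as $\sup_{m,n}|\psi^\nu_{m,n}(w)|\left(\gamma^{\alpha,\beta}_{m,n}\right)^{1/2}$ --- which is precisely why it exposes, rather than resolves, the problem with the necessity direction.
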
 

\begin{theorem}\label{MThm2}
	For $\alpha,\beta>0$, the operator $\mathcal{R}_w^{\nu}: L^2_{\nu}(\C) \longrightarrow B^2_{\alpha,\beta} (D^2)$ is compact and its singular values are given by 
	$$s^{\nu,\alpha,\beta}_{m,n}(w)=
	\left(  \frac{\nu \pi \Gamma(\alpha+1) \Gamma(\beta+1) }{\nu^{m+n} \Gamma(\alpha+m+2)\Gamma(\beta+n+2)} \right)^{1/2} |H^{\nu}_{m,n}(w,\bw)|.$$
	Moreover, it belongs to the $p$-Schatten class for every $p>\max(2/(\alpha+1); 2/(\beta+1))$.
\end{theorem}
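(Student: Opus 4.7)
The plan is to obtain a singular value decomposition of $\mathcal{R}_w^{\nu}$ by diagonalising it on the It\^o--Hermite basis of $L^2_\nu(\C)$ against the monomial basis of $B^2_{\alpha,\beta}(D^2)$, and then to translate that spectral data into the Schatten criterion. The monomials $\{u^m v^n\}_{m,n\ge 0}$ form an orthogonal basis of $B^2_{\alpha,\beta}(D^2)$ thanks to $\alpha,\beta>-1$ and the radial structure of $\omega_{\alpha,\beta}$; polar integration on each disk factor together with the Beta function gives
\begin{equation*}
\|u^m v^n\|_{\alpha,\beta}^{2} = \pi^{2}\,\frac{m!\,\Gamma(\alpha+1)}{\Gamma(\alpha+m+2)}\cdot\frac{n!\,\Gamma(\beta+1)}{\Gamma(\beta+n+2)}.
\end{equation*}
The closed form of $K^{\nu}_{u,v}(z;w)$ in \eqref{2dFrFTKernel2} is, by construction, the bilinear Mehler generating function of the $H^{\nu}_{m,n}$ (cf.~\cite{Gh2017Mehler,Zayed2018}); inserting that bilinear series into \eqref{TransR}, interchanging sum and integral (justified by the uniform convergence on compact sets of $\C$ together with the Gaussian weight $e^{-\nu|z|^{2}}$), and invoking the $L^{2}_\nu$-orthogonality of $\{H^{\nu}_{m,n}\}$ yields the diagonal action
\begin{equation*}
\mathcal{R}_w^{\nu}\bigl[H^{\nu}_{m,n}\bigr](u,v) = \gamma_{m,n}\, H^{\nu}_{m,n}(w,\bw)\, u^{m} v^{n},
\end{equation*}
for an explicit scalar $\gamma_{m,n}$ depending only on $\nu,m,n$. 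This already yields a singular value decomposition once the two orthogonal systems are rescaled to be orthonormal, and the null directions correspond exactly to $(m,n)\in N_{w}(H)$, reconciling with Theorem~\ref{MThm1}.

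Next, I would extract the singular values as the ratios $\|\mathcal{R}_w^{\nu}H^{\nu}_{m,n}\|_{\alpha,\beta}/\|H^{\nu}_{m,n}\|_{\nu}$. Plugging in the norm above, the known $L^{2}_\nu$-norm of the It\^o--Hermite polynomials, and the diagonal formula, the factors $m!\,n!$ cancel against parts of $\Gamma(\alpha+m+2)\Gamma(\beta+n+2)$ and one is left with exactly
\begin{equation*}
s^{\nu,\alpha,\beta}_{m,n}(w) = \left(\frac{\nu\pi\,\Gamma(\alpha+1)\Gamma(\beta+1)}{\nu^{m+n}\,\Gamma(\alpha+m+2)\Gamma(\beta+n+2)}\right)^{1/2}\bigl|H^{\nu}_{m,n}(w,\bw)\bigr|.
\end{equation*}

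For compactness and the Schatten membership it suffices to control the asymptotics of $s^{\nu,\alpha,\beta}_{m,n}(w)$. Stirling's estimate $\Gamma(\alpha+m+2)\sim m^{\alpha+1}m!$ shows that the prefactor behaves like $C\,m^{-(\alpha+1)}n^{-(\beta+1)}\nu^{-(m+n)}/(m!n!)$, while the closed-form Mehler sum evaluated at $z=w$ (a holomorphic function on the bidisk) provides the pointwise envelope $|H^{\nu}_{m,n}(w,\bw)|^{2}\le C(w)\,\nu^{m+n}m!n!$. Combining the two gives
\begin{equation*}
s^{\nu,\alpha,\beta}_{m,n}(w) \le C'(w)\,m^{-(\alpha+1)/2}\,n^{-(\beta+1)/2},
\end{equation*}
which vanishes as $m+n\to\infty$ (so $\mathcal{R}_w^{\nu}$ is compact) and whose $p$-th power is summable over $\N^{2}$ exactly when $p(\alpha+1)/2>1$ \emph{and} $p(\beta+1)/2>1$, equivalently $p>\max\bigl(2/(\alpha+1),\,2/(\beta+1)\bigr)$.

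The delicate step is the pointwise envelope $|H^{\nu}_{m,n}(w,\bw)|^{2}\le C(w)\,\nu^{m+n}m!n!$ with the correct dependence in $(m,n)$, since any weakening of this bound would enlarge the Schatten threshold. Extracting it cleanly from the Mehler series requires controlling the generating function at the diagonal $z=w$ across the whole bidisk rather than a fixed sub-bidisk. The remaining pieces --- the Mehler diagonalisation, the Beta-function bookkeeping on $D^{2}$, and the Bertrand-type convergence of the double $p$-series --- are essentially routine.
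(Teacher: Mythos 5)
Your proposal is correct and follows essentially the same route as the paper: diagonalise $\mathcal{R}_w^{\nu}$ on the It\^o--Hermite basis against the monomials $e_{m,n}$, read off the singular values $|\psi^\nu_{m,n}(w)|\left(\gamma^{\alpha,\beta}_{m,n}\right)^{1/2}$, and combine Stirling with the pointwise bound on $H^\nu_{m,n}$ to get compactness and the Schatten threshold. The ``delicate'' envelope you flag is exactly the standard inequality $|H^{\nu}_{m,n}(w,\bw)|^2\leq \frac{\pi}{\nu}\,\nu^{m+n}m!\,n!\,e^{\nu|w|^2}$ (equivalently $|\psi^\nu_{m,n}(w)|^2\leq e^{\nu|w|^2}$) that the paper also invokes, so no genuine gap remains.
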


The proof of Theorem \ref{MThm1} is contained in Propositions \ref{propwelldef}, \ref{propBound}, \ref{proprange} and \ref{propnull} presented in Section 3, while the one of Theorem \ref{MThm2} is given in Section 4. The next section is devoted to some preliminaries concerning weighted Bergman space on the bi-disk and It\^o--Hermite polynomials. We conclude the paper by discussing the close connection of $ \mathcal{R}_w^{\nu}$ to the fractional Hankel transforms 
\begin{align}\label{GFrHT}
\mathcal{H}_{u,v}^{\nu,\alpha}(f)(y)&=
	\frac{2\nu}{1-uv} \left( \frac{u}{v}\right)^{\alpha/2}   \int_0^\infty x f(x) I_{\alpha} \left( \frac{2\nu \sqrt{uv }}{1-uv}  xy \right)  e^{\frac{-\nu (x^2  +uv y^2)}{1-uv} } dr,
\end{align}
where $I_\alpha$ denotes the modified Bessel function \cite[p.222]{AndrewsAskeyRoy1999}. 

\begin{theorem}\label{MThm3}
Let $f\in L^2_{\nu}(\C)$ and $g_k$ the associated Fourier coefficients. Then, $\mathcal{H}_{u,v}^{\nu,\alpha}(g_k)$, for varying integer $k$, are the fractional Fourier coefficients of $\mathcal{F}^{\nu}_{u,v}f$,  the  fractional Fourier transform of $f$.
\end{theorem}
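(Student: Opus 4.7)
The plan is to move to polar coordinates on both the source and target $\C$-planes, expand the Mehler kernel $K^\nu_{u,v}$ into angular Fourier modes via the generating function for the modified Bessel functions $I_k$, and then recognize each radial piece as a fractional Hankel transform $\mathcal{H}^{\nu,k}_{u,v}$ in the sense of \eqref{GFrHT}.

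First I would set $\zeta=re^{i\theta}$ and $\xi=\rho e^{i\varphi}$, so that the angular Fourier expansion $f(\zeta)=\sum_{k\in\Z} g_k(r)e^{ik\theta}$ of $f\in L^2_\nu(\C)$ produces the radial coefficients $g_k(r)=\frac{1}{2\pi}\int_0^{2\pi} f(re^{i\theta})e^{-ik\theta}d\theta$. Then I would rewrite the cross-term in the exponential of \eqref{2dFrFTKernel2} as
\[
u\bzeta\xi+v\zeta\bxi \;=\; \sqrt{uv}\,r\rho\,(t+t^{-1}), \qquad t:=\sqrt{u/v}\,e^{i(\varphi-\theta)},
\]
and invoke the classical identity $\exp(\alpha(t+t^{-1})/2)=\sum_{k\in\Z} I_k(\alpha)\,t^k$ with $\alpha=2\nu\sqrt{uv}\,r\rho/(1-uv)$. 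This yields the Jacobi--Anger-type expansion
\[
K^\nu_{u,v}(\zeta;\xi) = \frac{\nu}{\pi(1-uv)}\,e^{-\nu uv(r^2+\rho^2)/(1-uv)}\sum_{k\in\Z}(u/v)^{k/2}\, I_k\!\left(\frac{2\nu\sqrt{uv}\,r\rho}{1-uv}\right)e^{ik(\varphi-\theta)}.
\]

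Second, I would substitute this into \eqref{2dFrFTZ} and integrate in $\theta$ first. Orthogonality of $\{e^{im\theta}\}_{m\in\Z}$ on $[0,2\pi]$ against the kernel factor $e^{-ik\theta}$ collapses the double sum to its diagonal $m=k$. Combining the Gaussian exponents via
\[
-\frac{\nu uv\,r^2}{1-uv}-\nu r^2 = -\frac{\nu r^2}{1-uv},
\]
the $k$-th surviving term reads
\[
e^{ik\varphi}\,\frac{2\nu}{1-uv}\!\left(\frac{u}{v}\right)^{k/2}\!\int_0^\infty r\,g_k(r)\,I_k\!\left(\frac{2\nu\sqrt{uv}\,r\rho}{1-uv}\right) e^{-\nu(r^2+uv\rho^2)/(1-uv)}\,dr,
\]
which is exactly $\mathcal{H}^{\nu,k}_{u,v}(g_k)(\rho)$ in the notation of \eqref{GFrHT}. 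Consequently
\[
\mathcal{F}^{\nu}_{u,v}f(\rho e^{i\varphi}) \;=\; \sum_{k\in\Z} \mathcal{H}^{\nu,k}_{u,v}(g_k)(\rho)\,e^{ik\varphi},
\]
identifying the $k$-th angular Fourier coefficient of $\mathcal{F}^{\nu}_{u,v}f$ as $\mathcal{H}^{\nu,k}_{u,v}(g_k)$, as claimed.

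The main obstacle I anticipate is justifying the termwise interchange of the Bessel series with the $\C$-integration (and of the sum over $m$ with the $\theta$-integration). These swaps can be handled by truncating the relevant series, applying Fubini to the resulting finite sums, and passing to the limit in $L^2_\nu(\C)$ using the Gaussian weight together with the uniform convergence of the generating-function expansion on compact subsets; the polar $L^2$-decomposition $L^2_\nu(\C)\cong \bigoplus_{k\in\Z} L^2((0,\infty), r\,e^{-\nu r^2}dr)\otimes \C\,e^{ik\theta}$ makes this Hilbertian justification quite clean.
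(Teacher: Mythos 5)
Your argument is correct and follows essentially the same route as the paper: pass to polar coordinates, decompose the Mehler kernel into angular Fourier modes so that the modified Bessel functions $I_k$ appear, and identify the $k$-th radial piece with $\mathcal{H}^{\nu,k}_{u,v}(g_k)$. The only cosmetic difference is that you invoke the Laurent generating function $e^{\alpha(t+t^{-1})/2}=\sum_{k}I_k(\alpha)t^k$ directly, whereas the paper expands the exponential as a double power series in $\zeta,\bzeta$ and resums the diagonal selected by the $\theta$-integration into the series \eqref{BesselFct}; your closing remarks on justifying the interchange of sum and integral are in fact more careful than the published proof.
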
 

\section{Preliminaries}

For fixed reals $\alpha,\beta >-1$, the weighted Bergman space $ B^2_{\alpha,\beta} (D^2)$ is a closed subspace of the Hilbert space $L^2_{\alpha,\beta} (D^2):=L^2\left(\mathbb{D}\times \mathbb{D};d\mu_{\alpha,\beta}\right) $ endowed with the scaler product  
$$ \scal{f,g}_{\alpha,\beta} = \int_{D^2} f(z,w) \overline{g(z,w)} d\mu_{\alpha,\beta}(z,w).$$
We denote by $\norm{\cdot}_{\alpha,\beta}$ the associated norm.
An orthonormal basis of $B^2_{\alpha,\beta} (D^2)$ is given by 
$$ 
\varphi^{\alpha,\beta}_{m,n}:= \left(\gamma^{\alpha,\beta}_{m,n} \right)^{-1/2} e_{m,n}
$$ 
where $e_{m,n}(z,w):=z^mw^n$ and $\gamma^{\alpha,\beta}_{m,n}$ is its square norm  given by 
\begin{align*}
\gamma^{\alpha,\beta}_{m,n} 
:=  \frac{\pi^2 \Gamma(\alpha+1) \Gamma(\beta+1)m!n!}{\Gamma(\alpha+m+2)\Gamma(\beta+n+2)}=  \norm{e_{m,n} }^2_{\alpha,\beta} .
\end{align*}
Subsequently, the sequential characterization of $ B^2_{\alpha,\beta} (D^2)$ is given by 
$$ B^2_{\alpha,\beta} (D^2) = \left\{ 
\sum_{m,n=0}^\infty a_{m;n} e_{m,n} ; \,  
\sum_{m,n=0}^\infty 
\gamma^{\alpha,\beta}_{m,n} |a_{m;n} |^2 < \infty
 \right\} .$$
Accordingly, $ B^2_{0,0} (D^2)$
 is identified to be the Hardy space on the bi-disk, while $ B^2_{-1,-1} (D^2)$
  is the classical Bergman space on $D^2$.
The reproducing kernel of $ B^2_{\alpha,\beta} (D^2)$ is given by 
\begin{align*}
K_{\alpha,\beta}((u,v);(z,w)) 
=  \frac{(\alpha+1)(\beta+1)}{\pi^2( 1- u\overline{z})^{\alpha+2}
( 1-v\overline{w})^{\beta+2}}  .
\end{align*}
Thus $ B^2_{\alpha,\beta} (D^2)$ is obtained as the Bergman  projection 
$$P(\varphi)(u,v) 
=\frac{(\alpha+1)(\beta+1)}{\pi^2} \int_{D^2}  \frac{\varphi (z,w)}{( 1- u\overline{z})^{\alpha+2}
 	( 1-v\overline{w})^{\beta+2}} d\mu_{\alpha,\beta}(z,w) $$ 
of $L^2_{\alpha,\beta}(D^2)$. 
Another realization is by means of the unitary two-dimensional second Bargmann transform 
	\begin{equation*}
\mathcal{B}_{\alpha,\beta} \varphi(z,w)=   
\frac{1}{\left( 1-z\right)^{\alpha+1}\left( 1-w\right)^{\beta+1} } \int_{\R^{+2}}  s^\alpha t^\beta \exp\left(\frac{ sw+tz - ( s + t ) }{(1-z)(1-w)} \right)  \varphi(s,t)   ds dt
\end{equation*}
acting on the Hilbert space $L^{2}(\R^{+2}; x^{\alpha}y^{\beta}e^{-x-y}dxdy)$. The  kernel function of $\mathcal{B}_{\alpha,\beta}$ appears as the tensor product of two copies of the kernel function  of the standard one-dimensional second Bargmann transform \cite[p. 203]{Bargmann1961}. 
However, it can be seen as the bilinear generating function involving the product of generalized Laguerre polynomials $ L^{(\alpha)}_m(s) L^{(\beta)}_n(t)$.

In the sequel, we will provide interesting realization of specific subspaces of $ B^2_{\alpha,\beta} (D^2)$ by invoking the complex Mehler function  
\begin{align}\label{Mehler20closed}
K^{\nu;\nu'}_{u,v}(z,w)  
=\frac{1}{1 -  uv}  \exp\left( \frac{ -uv(\nu |z|^2 + \nu'|w|^2)  + \nu' uzw + \nu v\overline{z}\overline{w}  }{1 -   uv} \right),
\end{align}
with $\nu,\nu'>0 $ and $u,v \in D$, associated to It\^o--Hermite polynomials $H_{m,n}^\nu $ defined on the complex plane $\C$ by
\cite{Ito52,IntInt06,Gh13ITSF}   
\begin{align}\label{gchpmu}
H_{m,n}^\nu  (z,\bz )=(-1)^{m+n}e^{\magn  z\bz }\dfrac{\partial ^{m+n}}{\partial \bz^{m} \partial z^{n}} \left(e^{-\magn  z \bz }\right) .
\end{align}
To this end, let recall that the $H_{m,n}^\nu $  form an orthogonal basis of $L^2_\nu(\C)$ and that the Mehler function in \eqref{Mehler20closed}
 can be expanded in terms of normalized It\^o--Hermite polynomials 
 	\begin{align}\label{basisL2c}
 \psi^\nu_{m,n}:= \left( \frac{\nu}{\pi \nu^{m+n} m!n!} \right)^{1/2} H^{\nu'}_{m,n}
 \end{align}
 as \cite{Gh2017Mehler} 
\begin{align}\label{Mehler20}
K^{\nu;\nu'}_{u,v}(z,w) :=\sum_{m,n=0}^{\infty}    u^m 
v^n \psi^\nu_{m,n}(z) \psi^\nu_{m,n}(w) .
\end{align}
For $\nu=\nu'=1$, this is exactly the one announced by W\"unsche \cite{Wunsche1999} and proved later by Ismail in \cite[Theorem 3.3]{IsmailTrans2016} as a specific case of his Kibble--Slepian formula \cite[Theorem 1.1]{IsmailTrans2016}. 

\section{Basic properties of $\mathcal{R}_w^{\nu}$}

We begin by observing that the kernel function in \eqref{2dFrFTKernel2} reads in terms of the one in \eqref{Mehler20closed} as
\begin{align}\label{Mehler20kk}
K^{\nu}_{u,v}(z,w)  = \left( \frac{\nu}{\pi}\right) K^{\nu;\nu}_{u,v}(\bz,w) .
\end{align} 
and therefore satisfies 
\begin{align}\label{kerker}
\int_{\C} K^\nu_{u,v}( z; w) \overline{K^\nu_{u,v}( z; w)}  e^{-\nu|z|^2} d\lambda(z)  =K^\nu_{|u|^2,|v|^2}(w;w) >0. 
\end{align}

\begin{proposition} \label{propwelldef}
		The integral transform $\mathcal{R}_w^{\nu}$ is well defined on $L^2_{\nu}(\C)$. 
\end{proposition}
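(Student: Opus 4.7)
The plan is to recognize $\mathcal{R}_w^{\nu}f(u,v)$ as an inner product pairing against the kernel slice $z\mapsto K^\nu_{u,v}(z;w)$ and then to verify pointwise finiteness via Cauchy--Schwarz. Concretely, for fixed $\nu>0$, $w\in\C$ and $(u,v)\in D^2$, I would write
\begin{align*}
\mathcal{R}_w^{\nu}f(u,v) = \scal{f, \overline{K^\nu_{u,v}(\cdot;w)}}_{L^2_\nu(\C)} ,
\end{align*}
so that establishing well-definedness reduces to showing that the map $z\mapsto K^\nu_{u,v}(z;w)$ lies in $L^2_\nu(\C)$ for each admissible $(u,v,w)$.

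The key input is the identity \eqref{kerker}, which already computes
\begin{align*}
\norm{K^\nu_{u,v}(\cdot;w)}^2_{L^2_\nu(\C)} = K^\nu_{|u|^2,|v|^2}(w;w).
\end{align*}
Since $u,v\in D$ we have $|u|^2|v|^2 < 1$, so the explicit closed-form expression \eqref{2dFrFTKernel2} for the kernel shows that the right-hand side is a finite positive number. In particular the kernel slice is square-integrable with respect to the Gaussian measure $e^{-\nu|z|^2}d\lambda$.

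Applying the Cauchy--Schwarz inequality then yields the pointwise bound
\begin{align*}
|\mathcal{R}_w^{\nu}f(u,v)| \leq \norm{f}_{L^2_\nu(\C)} \left( K^\nu_{|u|^2,|v|^2}(w;w) \right)^{1/2} < \infty
\end{align*}
for every $f\in L^2_\nu(\C)$ and every $(u,v)\in D^2$, which proves that the transform is well defined as a function on $D^2$. I do not expect any serious obstacle here: the statement is essentially an observation that the kernel belongs to $L^2_\nu(\C)$, guaranteed by \eqref{kerker}, and the rest is a one-line Cauchy--Schwarz estimate. The genuine analytic content (boundedness into $B^2_{\alpha,\beta}(D^2)$, identification of the null space, etc.) is deferred to the subsequent propositions.
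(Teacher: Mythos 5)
Your proposal is correct and follows essentially the same route as the paper: both use the identity \eqref{kerker} to see that the kernel slice $z\mapsto K^\nu_{u,v}(z;w)$ lies in $L^2_\nu(\C)$ and then apply Cauchy--Schwarz to obtain the pointwise bound $|\mathcal{R}_w^{\nu}f(u,v)| \leq \left( K^{\nu}_{|u|^2,|v|^2}(w;w)\right)^{1/2} \norm{f}_{L^2_{\nu}(\C)}$. Your write-up is in fact slightly cleaner, as it makes explicit the finiteness of $K^\nu_{|u|^2,|v|^2}(w;w)$ coming from $|u|^2|v|^2<1$.
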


\begin{proof}
Using \eqref{kerker}
and the Cauchy-Schwarz inequality, 
 we obtain 
\begin{align}
|\mathcal{R}_w^{\nu}f(u,v)| 
&\leq \left( \int_{\C} |K^\nu_{u,v}( w; w)|^2 \right) ^{1/2} \norm{f}_{L^2_{\nu}(\C)} 
 \nonumber
\\&\leq \left( K^{\nu}_{|u|^2,|v|^2}(w; w )\right)^{1/2} \norm{f}_{L^2_{\nu}(\C)} \label{estimateCS1}
\end{align}
for every $f\in L^2_{\nu}(\C)$. 
\end{proof}


 The action of $\mathcal{R}_w^{\nu}$ on $\psi^\nu_{m,n}$ in \eqref{basisL2c} is given by 
	\begin{align*}
	\mathcal{R}_w^{\nu}\psi^\nu_{m,n}	= \psi^\nu_{m,n}(w) e_{m,n}. 
	\end{align*}
This follows by means of \eqref{Mehler20kk} and \eqref{Mehler20}.
Therefore, the family $\mathcal{R}_w^{\nu}\psi^\nu_{m,n}$, for varying $m,n$, form an orthogonal system in $L^2_{\alpha,\beta} (D^2)$ since the monomials $e_{m,n}$ are. 
The next result discusses the boundedness of $\mathcal{R}_w^{\nu}$ from $L^2_{\nu}(\C)$ into the weighted Hilbert space 
$L^2_{\alpha,\beta} (D^2)$.

\begin{proposition}\label{propBound}
	For $\alpha>0$ and $\beta>0$, the operator $\mathcal{R}_w^{\nu}$ is  bounded from $L^2_{\nu}(\C)$ into $L^2_{\alpha,\beta} (D^2)$.
\end{proposition}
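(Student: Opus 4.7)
The strategy is to integrate the pointwise bound established in the proof of Proposition \ref{propwelldef} and verify convergence of the resulting integral. Concretely, squaring \eqref{estimateCS1} and integrating in $(u,v)$ against $d\mu_{\alpha,\beta}$ gives
\[ \norm{\mathcal{R}_w^{\nu} f}^2_{\alpha,\beta} \leq \norm{f}^2_{L^2_{\nu}(\C)} \int_{D^2} K^\nu_{|u|^2,|v|^2}(w;w)\,(1-|u|^2)^\alpha (1-|v|^2)^\beta\, d\lambda(u,v), \]
so it suffices to prove that the right-hand integral is finite. Since the integrand depends only on $|u|^2$ and $|v|^2$, passing to polar coordinates and substituting $s=|u|^2$, $t=|v|^2$ trivialises the angular integrations and reduces the task to proving finiteness of
\[ I(w) := \int_0^1\!\!\int_0^1 \frac{1}{1-st}\exp\!\left(\frac{\nu|w|^2(s+t-2st)}{1-st}\right)(1-s)^\alpha (1-t)^\beta\, ds\, dt. \]

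The main obstacle is the interplay between the exponential factor and the boundary behaviour of the Jacobi-type weights near the corner $(s,t)=(1,1)$. For the exponential I would use the elementary identity $s+t-2st = (1-st) - (1-s)(1-t)$, which yields $0\leq (s+t-2st)/(1-st)\leq 1$ on $[0,1]^2$ and therefore dominates the exponential by the constant $e^{\nu|w|^2}$. For the remaining singular prefactor $1/(1-st)$ I would apply the bound $1-st\geq\max(1-s,1-t)\geq\sqrt{(1-s)(1-t)}$ valid on $[0,1]^2$, which replaces the kernel by the factorised expression $(1-s)^{-1/2}(1-t)^{-1/2}$. The resulting double integral then splits into a product of Beta-type integrals, yielding
\[ I(w)\leq \frac{e^{\nu|w|^2}}{(\alpha+1/2)(\beta+1/2)}, \]
finite whenever $\alpha,\beta>-1/2$ and in particular for $\alpha,\beta>0$. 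Reassembling the constants from the polar change of variables produces an explicit bound of the form $\norm{\mathcal{R}_w^{\nu} f}_{\alpha,\beta}\leq C_{\nu,\alpha,\beta}(w)\norm{f}_{L^2_{\nu}(\C)}$.

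An alternative route would bypass the pointwise Cauchy--Schwarz estimate and instead exploit the diagonalising identity $\mathcal{R}_w^{\nu}\psi^\nu_{m,n}=\psi^\nu_{m,n}(w)\,e_{m,n}$ together with the orthogonality of the $e_{m,n}$ in $L^2_{\alpha,\beta}(D^2)$, reducing boundedness to the uniform bound $\sup_{m,n}\gamma^{\alpha,\beta}_{m,n}|\psi^\nu_{m,n}(w)|^2<\infty$. I would however prefer the integral route, since it avoids any pointwise growth estimate for the It\^o--Hermite polynomials and makes direct use of the kernel identities \eqref{Mehler20kk}--\eqref{kerker} already recorded at the beginning of Section~3.
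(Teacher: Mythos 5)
Your proof is correct and follows essentially the same route as the paper: square the Cauchy--Schwarz estimate \eqref{estimateCS1}, integrate against $d\mu_{\alpha,\beta}$, substitute $s=|u|^2$, $t=|v|^2$, and dominate the exponential by $e^{\nu|w|^2}$ via $0\le (s+t-2st)/(1-st)\le 1$. The only point of divergence is the treatment of the prefactor $1/(1-st)$: the paper uses $1-st\ge (1-s)(1-t)$, which degrades the weight to $\omega_{\alpha-1,\beta-1}$ and forces $\alpha,\beta>0$, whereas your bound $1-st\ge\max(1-s,1-t)\ge\sqrt{(1-s)(1-t)}$ costs only half a power of each factor and yields finiteness for all $\alpha,\beta>-1/2$. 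This sharpening is valid, and for the proposition as stated ($\alpha,\beta>0$) it certainly suffices; but be aware that it puts your estimate in tension with the ``only if'' direction of Theorem \ref{MThm1}, since it shows boundedness into $L^2_{\alpha,\beta}(D^2)$ on a strictly larger parameter range than the theorem asserts is possible --- a discrepancy that points at the paper's unproved necessity claim rather than at any flaw in your argument.
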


\begin{proof} Set	\begin{align}
k_w^{\nu,\alpha,\beta} :=	\int_{D^2}K^\nu_{|u|^2,|v|^2}(w; w ) d\mu_{\alpha,\beta}(u,v)  .
	\end{align}
Then, from \eqref{estimateCS1}, we have 		
	\begin{align}
	\norm{\mathcal{R}_w^{\nu}f}^2_{\alpha,\beta} \leq 
	k_w^{\nu,\alpha,\beta} 
	\norm{f}^2_{L^2_{\nu}(\C)} .  
	\end{align}
Subsequently, the boundedness of the operator $\mathcal{R}_w^{\nu}$ requires that $k_w^{\nu,\alpha,\beta}$ be finite.
But, using the closed expression of $K^\nu_{|u|^2,|v|^2}(w; w )$, we get
\begin{align}
k_w^{\nu,\alpha,\beta} 
= \nu\pi    \int_0^1\int_0^1   \exp\left(  \frac{\nu(s+t-2st)}{1-st} |w|^2 \right) \frac{\omega_{\alpha,\beta}(s,t)}{1-st} dsdt.
\end{align}
%
Hence since $0\leq (s+t -2st)/(1-st) \leq 1$ and $1/(1-st) \leq 1/(1-s)(1-t)$, it follows
\begin{align}
\nu\pi    \int_0^1\int_0^1 \omega_{\alpha,\beta}(s,t)dsdt
\leq	k_w^{\nu,\alpha,\beta} 
\leq \nu\pi e^{\nu |w|^2}   \int_0^1\int_0^1  \omega_{\alpha-1,\beta-1}(s,t)  dsdt.
	\end{align}
Thus, $\mathcal{R}_w^{\nu}$ is bounded for $\alpha,\beta>0$. In this case $\mathcal{R}_w^{\nu}f$ belongs to $L^2_{\alpha,\beta} (D^2)$ for every $f\in L^2_{\nu}(\C)$.
\end{proof}
 
Now,  appealing to the fact that $\psi^\nu_{m,n}$ constitutes an orthonormal basis of $L^2_{\nu}(\C)$, we can expand any $f\in L^2_{\nu}(\C)$ as $f=\sum\limits_{m,n=0}^\infty\alpha_{m,n} \psi^\nu_{m,n}$,
so that one gets 
\begin{align}\label{ExprRf}
\mathcal{R}_w^{\nu}f = \sum\limits_{m,n=0}^\infty\alpha_{m,n}
\psi^\nu_{m,n}(w)  \left(\gamma^{\alpha,\beta}_{m,n} \right)^{1/2} \varphi^{\alpha,\beta}_{m,n} .
\end{align}
The series in \eqref{ExprRf} converges uniformly on compact sets of the complex plane.
Direct computation shows that we have 
\begin{align}\label{normRf}
\norm{\mathcal{R}_w^{\nu}f}^2 _{\alpha,\beta}
&=   \sum\limits_{m,n=0}^\infty   |\alpha_{m,n} |^2 |\psi^\nu_{m,n}(w) |^2 \gamma^{\alpha,\beta}_{m,n} .
\end{align}

Accordingly, the description of the range and the null space of the $\mathcal{R}_w^{\nu}$  are closely connected to zeros of It\^o--Hermite polynomials. Thus, we let $\mathcal{Z}(H^\nu_{m,n})$ denotes the zeros set of $H^\nu_{m,n}$ for fixed $m,n$, while $\mathcal{Z}(H) := \cup_{m,n} \mathcal{Z}(H^\nu_{m,n})$. We also set 
$$ N_{w}(H) = \{(m,n); m,n=0,1,2, \cdots ; \, H^\nu_{m,n}(w,\bw)=0\} .$$

\begin{proposition}\label{proprange} Let $\alpha>0$ and $\beta>0$. If $w \notin \mathcal{Z}(H)$, then the range of $\mathcal{R}_w^{\nu}$ acting on $L^2_{\nu}(\C)$ is a specific subspace of the weighted Bergman space $B^2_{\alpha,\beta} (D^2)$.   
\end{proposition}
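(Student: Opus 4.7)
The plan is to use the orthonormal basis $\{\psi^\nu_{m,n}\}$ of $L^2_{\nu}(\C)$ together with the explicit action $\mathcal{R}_w^{\nu}\psi^\nu_{m,n}=\psi^\nu_{m,n}(w)\,e_{m,n}$ already computed, and to combine this with the series expansion \eqref{ExprRf} and norm formula \eqref{normRf}. Since $w\notin \mathcal{Z}(H)$ means $H^\nu_{m,n}(w,\bw)\neq 0$ for every pair $(m,n)$, the scalars $\psi^\nu_{m,n}(w)$ are all nonzero, which is the key ingredient that will allow us to invert the coefficient map.

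First I would take an arbitrary $f\in L^2_{\nu}(\C)$ and expand $f=\sum_{m,n}\alpha_{m,n}\psi^\nu_{m,n}$ with $\sum_{m,n}|\alpha_{m,n}|^2=\norm{f}^2_{L^2_{\nu}(\C)}$. Using the boundedness from Proposition \ref{propBound} to justify termwise application of $\mathcal{R}_w^{\nu}$, I obtain $\mathcal{R}_w^{\nu}f=\sum_{m,n}b_{m,n}\,e_{m,n}$ with $b_{m,n}:=\alpha_{m,n}\psi^\nu_{m,n}(w)$. Thus, under the non-vanishing hypothesis on $w$, the assignment $(\alpha_{m,n})\mapsto (b_{m,n})$ is a bijection between $\ell^2$ and the sequence space
$$\mathcal{E}^{\nu}_w:=\Bigl\{(b_{m,n})\,;\ \sum_{m,n}\frac{|b_{m,n}|^2}{|\psi^\nu_{m,n}(w)|^2}<\infty\Bigr\}.$$
Consequently the range identifies with
$$\mathrm{Range}(\mathcal{R}_w^{\nu})=\Bigl\{g=\sum_{m,n}b_{m,n}\,e_{m,n}\in B^2_{\alpha,\beta}(D^2)\,;\ (b_{m,n})\in \mathcal{E}^{\nu}_w\Bigr\},$$
which is the ``specific subspace'' claimed.

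Next I would verify the two properties that make this a meaningful subspace. Proper containment in $B^2_{\alpha,\beta}(D^2)$: every $g$ in the range lies in $B^2_{\alpha,\beta}(D^2)$ since \eqref{normRf} gives $\norm{\mathcal{R}_w^{\nu}f}^2_{\alpha,\beta}=\sum_{m,n}\gamma^{\alpha,\beta}_{m,n}|b_{m,n}|^2<\infty$ (guaranteed by the bound in the proof of Proposition \ref{propBound}). Density: each monomial $e_{m,n}$ is, up to the nonzero factor $\psi^\nu_{m,n}(w)$, the image of $\psi^\nu_{m,n}$, and since $\{e_{m,n}\}$ is a complete system in $B^2_{\alpha,\beta}(D^2)$, the range is dense. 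Properness follows from the compactness of $\mathcal{R}_w^{\nu}$ stated in Theorem \ref{MThm2}, which precludes surjectivity onto the infinite-dimensional target.

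The main obstacle I anticipate is not analytic but interpretive: one must pin down what ``specific subspace'' means. The natural reading, supported by the explicit coefficient formulas \eqref{ExprRf}--\eqref{normRf}, is the weighted $\ell^2$-characterization $\mathcal{E}^{\nu}_w$ above. All the remaining steps---exchanging summation with $\mathcal{R}_w^{\nu}$, computing the Bergman norm coefficient by coefficient, and inverting the diagonal map $\alpha_{m,n}\mapsto \alpha_{m,n}\psi^\nu_{m,n}(w)$---are routine once the non-vanishing of $\psi^\nu_{m,n}(w)$ is in hand.
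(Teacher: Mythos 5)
Your proof is correct, and its core coincides with the paper's: the containment $\mathcal{R}_w^{\nu}(L^2_{\nu}(\C))\subset B^2_{\alpha,\beta}(D^2)$ follows from the expansion \eqref{ExprRf}, the norm identity \eqref{normRf} and Proposition \ref{propBound}, which is exactly how the paper argues (the paper checks holomorphy by uniform convergence on compacta, you via the sequential characterization of $B^2_{\alpha,\beta}(D^2)$ --- both fine). You go further than the paper by actually identifying the range as the weighted sequence space $\mathcal{E}^{\nu}_w$ and by observing density and non-surjectivity via compactness; the paper relegates the strictness of the inclusion to a remark and never writes the range down explicitly, so your version pins down what ``specific subspace'' means. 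One small observation: the hypothesis $w\notin\mathcal{Z}(H)$ plays no role in the containment itself (neither proof uses it there); you correctly invoke it only for inverting the diagonal coefficient map, i.e.\ for the explicit description and injectivity.
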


\begin{proof}
This is immediate by means of \eqref{ExprRf} and Proposition \ref{propBound}. Indeed, $ \mathcal{R}_w^{\nu}f$ belongs to $L^2_{\alpha,\beta} (D^2)$ and is clearly holomorphic on $D^2$ by Stone-Weierstrass theorem. Hence, 
$\mathcal{R}_w^{\nu}(L^2_{\nu}(\C)) \subset B^2_{\alpha,\beta} (D^2)$. 
\end{proof}

\begin{remark}
	Concerning the converse inclusion, we can provide an explicit example showing that the range of $L^2_{\nu}(\C)$ by $\mathcal{R}_w^{\nu}$ is strictly contained in $ B^2_{\alpha,\beta} (D^2)$. However, this can be reproved using compactness (discussed below) of the transform $\mathcal{R}_w^{\nu}$, since the range of compact operator is not closed in $ B^2_{\alpha,\beta} (D^2)$ ay least for in $\alpha,\beta>0$.
\end{remark}

\begin{proposition}\label{propnull}
The null space of $\mathcal{R}_w^{\nu}$ acting on $L^2_{\nu}(\C)$ is a vector space characterized explicitly as  
$$Ker(\mathcal{R}_w^{\nu}) =  Span\{H_{m,n}; \, (m,n)\in N_{w}(H) \}$$
with dimension equals to the cardinal of $N_{w}(H) $.
Thus, the integral transform
$\mathcal{R}_w^{\nu}$ is one-to-one if and only if $w \notin \mathcal{Z}(H)$. 
\end{proposition}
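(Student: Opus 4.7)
The plan is to read off the kernel directly from the expansion \eqref{ExprRf} of $\mathcal{R}_w^\nu f$ in the orthonormal basis $\{\varphi^{\alpha,\beta}_{m,n}\}$ of $B^2_{\alpha,\beta}(D^2)$. Writing any $f \in L^2_\nu(\C)$ as $f = \sum_{m,n} \alpha_{m,n} \psi^\nu_{m,n}$ and using that $\gamma^{\alpha,\beta}_{m,n}>0$, the Parseval-type identity \eqref{normRf} gives
\begin{equation*}
\mathcal{R}_w^\nu f = 0 \iff \alpha_{m,n}\,\psi^\nu_{m,n}(w) = 0 \quad \text{for every } (m,n) \in \N \times \N.
\end{equation*}
This is the key logical step: it reduces vanishing of $\mathcal{R}_w^\nu f$ in the $B^2_{\alpha,\beta}$-norm to a termwise condition on the coefficients. (If one prefers not to appeal to boundedness, the same conclusion follows from uniqueness of power series coefficients of the holomorphic function $\mathcal{R}_w^\nu f$ on $D^2$.)

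Next, by the definition \eqref{basisL2c}, $\psi^\nu_{m,n}(w) = c_{m,n}\, H^\nu_{m,n}(w,\bw)$ for the strictly positive constant $c_{m,n} = (\nu/(\pi\nu^{m+n}m!n!))^{1/2}$, so $\psi^\nu_{m,n}(w) = 0$ if and only if $(m,n) \in N_w(H)$. Combining this with the displayed equivalence, $f$ belongs to $Ker(\mathcal{R}_w^\nu)$ iff $\alpha_{m,n} = 0$ for every $(m,n) \notin N_w(H)$, i.e.\ iff $f$ is supported in the basis $\{\psi^\nu_{m,n}\}_{(m,n)\in N_w(H)}$. Since each $\psi^\nu_{m,n}$ is a nonzero scalar multiple of $H^\nu_{m,n}$, this exactly gives the announced description
\begin{equation*}
Ker(\mathcal{R}_w^\nu) = Span\{H^\nu_{m,n}: (m,n) \in N_w(H)\},
\end{equation*}
(understood as closed linear span when $N_w(H)$ is infinite). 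The dimension formula follows because the $H^\nu_{m,n}$ are linearly independent, being a rescaling of the orthonormal family $\psi^\nu_{m,n}$. Finally, $\mathcal{R}_w^\nu$ is injective iff $N_w(H) = \emptyset$, which, by the very definition $\mathcal{Z}(H) = \bigcup_{m,n}\mathcal{Z}(H^\nu_{m,n})$, is equivalent to $w \notin \mathcal{Z}(H)$.

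The only genuine subtlety is the transition from the termwise vanishing condition $\alpha_{m,n}\psi^\nu_{m,n}(w) = 0$ to the orthogonal-series statement about $f$; this is the place where either the orthonormality of $\{\varphi^{\alpha,\beta}_{m,n}\}$ in $B^2_{\alpha,\beta}(D^2)$ (requiring $\alpha,\beta>0$ from Proposition \ref{propBound}) or analyticity of $\mathcal{R}_w^\nu f$ on $D^2$ must be invoked. Beyond this, the argument is a direct bookkeeping in the expansion and involves no further estimates.
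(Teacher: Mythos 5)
Your proposal is correct and follows essentially the same route as the paper: both arguments read off from the Parseval-type identity \eqref{normRf} that $\mathcal{R}_w^{\nu}f=0$ forces $\alpha_{m,n}\psi^\nu_{m,n}(w)=0$ for all $(m,n)$, then use that $\psi^\nu_{m,n}$ is a nonzero multiple of $H^\nu_{m,n}$ to identify the kernel with the span indexed by $N_w(H)$. Your added precision that the span should be understood as a closed span when $N_w(H)$ is infinite is a sound refinement of the paper's statement, but does not change the argument.
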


\begin{proof} 
	According to \eqref{normRf}, if $f=\sum\limits_{m,n=0}^\infty\alpha_{m,n} \psi^\nu_{m,n}\in L^2_{\nu}(\C)$ is in the null space of $\mathcal{R}_w^{\nu}$, then $\norm{\mathcal{R}_w^{\nu}f}=0$ and hence
	$ \alpha_{m,n} \psi^\nu_{m,n}(w) =0$ for every $m,n$. Therefore, the null space of $\mathcal{R}_w^{\nu}$ reduces to $f=0$ when  $w \notin \mathcal{Z}(H)$. Now, for $w\in \mathcal{Z}(H)$, we conclude that
	$\alpha_{m,n} =0$
	for all $(m,n)\notin N_{w}(H)$. Therefore, $f= \sum\limits_{(m,n)\in N_w(H)} \alpha_{m,n} \psi^\nu_{m,n} $ which proves
	$$Ker (\mathcal{R}_w^{\nu})   \subset Span\{ \psi^\nu_{m,n}; \, (m,n)\in N_{w}(H) \}.$$  
	The converse inclusion is trivial and hence $\dim(Ker (\mathcal{R}_w^{\nu})) = Cardinal (N(H))$.	  
\end{proof}


\section{Proof of Theorem \ref{MThm2}: Compactness and membership in $p$-Schatten class}

Set 
	$$ c^{\nu,\alpha,\beta}_{m,n}(w) := 
	 \psi^\nu_{m,n}(w)  \left(\gamma^{\alpha,\beta}_{m,n} \right)^{1/2} $$ 
and consider the finite rank operators 
	\begin{align*}
\mathcal{R}_{p,q}f=\sum\limits_{m=0}^{p} \sum\limits_{n=0}^{q} c^{\nu,\alpha,\beta}_{m,n}(w) \scal{ f,\psi^\nu_{m,n} }_{L^2_{\nu}(\C)}  	  \varphi^{\alpha,\beta}_{m,n}
	\end{align*} 
which are bounded and compact. Then, using the fact $ |c^{\nu,\alpha,\beta}_{m,n}(w)|^2 \leq e^{ \nu|w|^2 }\gamma^{\alpha,\beta}_{m,n}  $ as well as $\norm{f}_{L^2_{\nu}(\C)}^2 = \sum\limits_{m,n=0}^{\infty}  | \scal{ f,\psi^\nu_{m,n} }_{L^2_{\nu}(\C)} |^2$, we obtain
\begin{align*} \norm{(\mathcal{R}_w^{\nu}  - \mathcal{R}_{p,q} ) f}^2& = 
\sum\limits_{m=p+1}^{\infty} \sum\limits_{n=q+1}^{\infty} \left|c^{\nu,\alpha,\beta}_{m,n}(w) \right|^2 \left|\scal{ f,\psi^\nu_{m,n} }_{L^2_{\nu}(\C)} \right|^2\\
&\leq 
\left( 	\sum\limits_{m=p+1}^{\infty} \sum\limits_{n=q+1}^{\infty} |c^{\nu,\alpha,\beta}_{m,n}(w) |^2 \right) \norm{f}_{L^2_{\nu}(\C)}^2 
\\& \leq  e^{ \nu|w|^2 } \left( \sum\limits_{m=p+1}^{\infty} \sum\limits_{n=q+1}^{\infty}  \gamma^{\alpha,\beta}_{m,n} \right)  \norm{f}_{L^2_{\nu}(\C)}^2 ,
\end{align*}
so that the following estimate for the operator norm
	\begin{align*}
\norm{\mathcal{R}_w^{\nu} - \mathcal{R}_{p,q} }^2  
\leq e^{ \nu|w|^2 } \sum\limits_{m=p+1}^{\infty} \sum\limits_{n=q+1}^{\infty}  \gamma^{\alpha,\beta}_{m,n}  .
\end{align*}
follows. 
The series in the right-hand side is convergent for $\alpha>0$ and $\beta>0$, and hence its rest goes to zero, so that $\lim\limits_{p,q\longrightarrow \infty } \norm{\mathcal{R}_{p,q}-\mathcal{R}_w^{\nu}}=0$.

The above discussion can be reformulated as follows.

\begin{proposition} 
	Let $\alpha>0$ and $\beta>0$, then  $\mathcal{R}_w^{\nu}(f)$ is compact. 
\end{proposition}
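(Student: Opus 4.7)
The plan is to prove compactness by exhibiting $\mathcal{R}_w^{\nu}$ as the operator-norm limit of a sequence of finite-rank (hence compact) operators, and then invoking the standard fact that the operator-norm limit of compact operators is compact. The natural candidate approximants are suggested by the diagonal-like action $\mathcal{R}_w^{\nu}\psi^\nu_{m,n} = \psi^\nu_{m,n}(w)\,e_{m,n}$ already established in Section~3: simply truncate this expansion to indices $m\le p$, $n\le q$, which yields the finite-rank operators $\mathcal{R}_{p,q}$ displayed above.

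To control $\|\mathcal{R}_w^{\nu}-\mathcal{R}_{p,q}\|$, I would expand $f\in L^2_\nu(\C)$ in the orthonormal basis $\{\psi^\nu_{m,n}\}$, apply the expression \eqref{ExprRf} together with the orthogonality of the normalized basis $\{\varphi^{\alpha,\beta}_{m,n}\}$ of $B^2_{\alpha,\beta}(D^2)$, and then use Parseval's identity $\|f\|_{L^2_\nu(\C)}^2 = \sum |\langle f,\psi^\nu_{m,n}\rangle|^2$. This reduces the matter to a uniform estimate on the coefficients $c^{\nu,\alpha,\beta}_{m,n}(w)=\psi^\nu_{m,n}(w)\,(\gamma^{\alpha,\beta}_{m,n})^{1/2}$ followed by a tail-sum bound of the form $\|\mathcal{R}_w^{\nu}-\mathcal{R}_{p,q}\|^2 \le \sum_{m>p,\,n>q}|c^{\nu,\alpha,\beta}_{m,n}(w)|^2$.

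The key quantitative input is the pointwise bound $|\psi^\nu_{m,n}(w)|^2 \le e^{\nu|w|^2}$ on the normalized It\^o--Hermite polynomials, which lets me replace the coefficient estimate by $e^{\nu|w|^2}\gamma^{\alpha,\beta}_{m,n}$ and thus reduces the whole argument to the convergence of $\sum_{m,n}\gamma^{\alpha,\beta}_{m,n}$. Using Stirling's asymptotics $\Gamma(\alpha+m+2)\sim m!\,m^{\alpha+1}$ in the explicit formula for $\gamma^{\alpha,\beta}_{m,n}$ in Section~2, the general term behaves like $m^{-(\alpha+1)}n^{-(\beta+1)}$, which is summable precisely when $\alpha>0$ and $\beta>0$. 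Consequently the tail vanishes as $p,q\to\infty$ and the approximation is established.

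The main obstacle is the clean derivation of the pointwise bound $|\psi^\nu_{m,n}(w)|^2\le e^{\nu|w|^2}$; the cleanest route is to exploit the Mehler identity \eqref{Mehler20} with $\nu'=\nu$ evaluated at coincident points $z=w$ and extract the coefficient-wise estimate via Cauchy's inequality on the generating power series (each term is dominated by the full kernel evaluated at a suitable point in $D^2$), a route compatible with the closed form \eqref{Mehler20closed}. Everything else is straightforward bookkeeping: checking that $\mathcal{R}_{p,q}$ is indeed finite-rank and bounded, and invoking the stability of compactness under operator-norm limits to conclude.
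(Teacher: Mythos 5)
Your overall strategy is exactly the paper's: truncate the diagonal expansion to get finite-rank operators $\mathcal{R}_{p,q}$, use Parseval to reduce $\norm{\mathcal{R}_w^{\nu}-\mathcal{R}_{p,q}}^2$ to the tail of $\sum_{m,n}|c^{\nu,\alpha,\beta}_{m,n}(w)|^2$, invoke the pointwise bound $|\psi^\nu_{m,n}(w)|^2\lesssim e^{\nu|w|^2}$, and conclude from the convergence of $\sum_{m,n}\gamma^{\alpha,\beta}_{m,n}$ for $\alpha,\beta>0$ that the operator-norm limit of compact operators is compact.

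The one place you go beyond the paper --- deriving the pointwise bound from the Mehler kernel on the diagonal --- is where a genuine gap appears. Coefficient extraction from the positive series
$\sum_{m,n} s^m t^n |\psi^\nu_{m,n}(w)|^2 = (1-st)^{-1}\exp\bigl(\nu\tfrac{s+t-2st}{1-st}|w|^2\bigr)$
gives $|\psi^\nu_{m,n}(w)|^2 \le s^{-m}t^{-n}(1-st)^{-1}e^{\nu|w|^2}$, and optimizing over $s,t\in(0,1)$ (e.g.\ $s=\tfrac{m}{m+1}$, $t=\tfrac{n}{n+1}$) unavoidably leaves a factor $\tfrac{(m+1)(n+1)}{m+n+1}\asymp\min(m,n)+1$: you cannot send $s,t\to 1$ without the prefactor $(1-st)^{-1}$ blowing up. With that extra polynomial factor the tail sum $\sum \min(m,n)\,\gamma^{\alpha,\beta}_{m,n}$ only converges for roughly $\alpha+\beta>1$, so your argument would not cover the full range $\alpha,\beta>0$ claimed in the proposition. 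To get the constant-free bound $|H^{\nu}_{m,n}(w,\bw)|\le(\nu^{m+n}m!n!)^{1/2}e^{\nu|w|^2/2}$ one needs a different argument, e.g.\ Cauchy--Schwarz applied to an integral (or Fourier) representation of $H^\nu_{m,n}$, or the observation that $e^{-\nu|w|^2/2}H^{\nu}_{m,n}(w,\bw)/(\nu^{m+n}m!n!)^{1/2}$ is a matrix coefficient of a unitary (displacement) operator between normalized basis vectors, hence of modulus at most $1$. (To be fair, the paper asserts this bound without proof as well; everything else in your write-up matches its proof.)
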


\begin{remark}
	The expansion in \eqref{ExprRf}, 
	\begin{align*}
	\mathcal{R}_w^{\nu}(f) =  \sum\limits_{m,n=0}^\infty c^{\nu,\alpha,\beta}_{m,n}(w) 
	\scal{ f,\psi^\nu_{m,n} }_{L^2_{\nu}(\C)}  	  \varphi^{\alpha,\beta}_{m,n},
	\end{align*}
	looks like the spectral decomposition of the operator $\mathcal{R}_w^{\nu}$. 
\end{remark}

From general context, we know that the adjoint of the integral transform $\mathcal{R}_w^{\nu}$ is given through 
$$ (\mathcal{R}_w^{\nu})^*g(z) =\scal{g, K^\nu_{u,v}( \cdot ; w ) }_{\alpha,\beta}  = \int_{D^2} g(u,v) \overline{K^\nu_{u,v}( z ; w )} d\mu_{\alpha,\beta} $$
for every function $g\in L^2_{\alpha,\beta} (D^2)$. 
This can easily be handled by direct computation. 
Subsequently, $(\mathcal{R}_w^{\nu})^*\mathcal{R}_w^{\nu}$ is an integral transform on $L^2_{\nu}(\C)$
$$ (\mathcal{R}_w^{\nu})^*\mathcal{R}_w^{\nu} f = \int_{\C} S_w^\nu(\zeta,z) f(\zeta) e^{-\nu|\zeta|^2} d\lambda(\zeta) 
$$
 with kernel function given by 
$$ S_w^\nu(\zeta,z) := \sum_{m,n=0}^\infty
 | c^{\nu,\alpha,\beta}_{m,n}(w)|^2   
 \psi^\nu_{m,n}(z) \overline{\psi^\nu_{m,n}(\zeta)} .$$ 
Therefore,  we have 
\begin{align*}
(\mathcal{R}_w^{\nu})^*\mathcal{R}_w^{\nu}(f)=\sum\limits_{m,n=0}^\infty
| c^{\nu,\alpha,\beta}_{m,n}(w)|^2  \scal{ f,  \psi^\nu_{m,n}}\psi^\nu_{m,n},
\end{align*}
and in particular 
 \begin{align*}
(\mathcal{R}_w^{\nu})^*\mathcal{R}_w^{\nu}(\psi^\nu_{m,n})= 
| c^{\nu,\alpha,\beta}_{m,n}(w)|^2  \psi^\nu_{m,n}.
\end{align*}
Accordingly, since $\psi^\nu_{m,n}$ form an orthonormal basis of $L^2_{\nu}(\C)$, the  singular values of $\mathcal{R}_w^{\nu}$ which are the eigenvalue of  $|\mathcal{R}_w^{\nu}|:= ( (\mathcal{R}_w^{\nu})^* \mathcal{R}_w^{\nu})^{1/2}$ are given by 
$$s^{\nu,\alpha,\beta}_{m,n}(w)=|c^{\nu,\alpha,\beta}_{m,n}(w)|
 :=|\psi^\nu_{m,n}(w)|\left(\gamma^{\alpha,\beta}_{m,n} \right)^{1/2} .$$
  More explicitly,  
 \begin{align}\label{explicitsinfval}
 s^{\nu,\alpha,\beta}_{m,n}(w)=
\left(  \frac{\nu \pi \Gamma(\alpha+1) \Gamma(\beta+1) }{\nu^{m+n} \Gamma(\alpha+m+2)\Gamma(\beta+n+2)} \right)^{1/2} |H^{\nu}_{m,n}(w,\bw)|
.
\end{align}
Subsequently,
 \begin{align*}
 s^{\nu,\alpha,\beta}_{m,n}(w)	
 &\leq \pi e^{\frac{\nu|w|^2}{2}}\left(\frac{m!n! \Gamma(\alpha+1) \Gamma(\beta+1)}
 {\Gamma(m+\alpha+2)\Gamma(n+\beta+2)}\right)^{\frac{1}{2}}.
 \end{align*}
 It follows that  
$\lim\limits_{m,n\longrightarrow\infty} s^{\nu,\alpha,\beta}_{m,n}(w)=0 $ since $\alpha,\beta>0$ and the right hand-side behaves as $ m^{-\alpha-1}n^{-\beta-1}$ for $m,n$ large enough. 
 Moreover,  $\mathcal{R}_{w}^{\nu}$ is in the $p$-Schatten class if  $p(\alpha+1)/2>1$ and $p(\beta+1)/2>1$, i.e. such that 
 $p>\max(2/(\alpha+1); 2/(\beta+1))$. This readily follows by means of 
\begin{align*}
(s^{\nu,\alpha,\beta}_{m,n}(w))^p
& \leq \pi^p  \frac{\left( e^{\nu |w|^2 } \Gamma(\alpha+1) \Gamma(\beta+1)\right)^{\frac{p}{2}}}{m^{p(\alpha+1)/2}n^{p(\beta+1)/2}}.
\end{align*} 
Thus we have proved the following 

\begin{proposition}
Let $\alpha,\beta >0$. 
The singular values of  $\mathcal{R}_{w}^{\nu}$  are given by \eqref{explicitsinfval}. Moreover, $\mathcal{R}_{w}^{\nu}$ is in $p$-Schatten class for every $p$ such that  $p>\max(2/(\alpha+1); 2/(\beta+1))$.
\end{proposition}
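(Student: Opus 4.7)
The plan is to read off a singular value decomposition of $\mathcal{R}_w^{\nu}$ directly from its action on the orthonormal basis $\{\psi^\nu_{m,n}\}$ of $L^2_\nu(\C)$, and then to convert the Schatten claim into a summability question about an explicit series of Gamma factors. Recall that the preceding discussion established $\mathcal{R}_w^{\nu}\psi^\nu_{m,n} = c^{\nu,\alpha,\beta}_{m,n}(w)\, \varphi^{\alpha,\beta}_{m,n}$ with $c^{\nu,\alpha,\beta}_{m,n}(w) = \psi^\nu_{m,n}(w)\,(\gamma^{\alpha,\beta}_{m,n})^{1/2}$, where $\{\varphi^{\alpha,\beta}_{m,n}\}$ is an orthonormal basis of $B^2_{\alpha,\beta}(D^2)$. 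Because both input and output bases are orthonormal, this is already in SVD form up to phases.

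To make this rigorous, I would first compute the adjoint of $\mathcal{R}_w^{\nu}$ via Fubini, obtaining the integral representation against $\overline{K^\nu_{u,v}(\cdot;w)}$ already recorded in the text. Composing yields $(\mathcal{R}_w^\nu)^*\mathcal{R}_w^\nu \psi^\nu_{m,n} = |c^{\nu,\alpha,\beta}_{m,n}(w)|^2 \psi^\nu_{m,n}$, so the $\psi^\nu_{m,n}$ are eigenfunctions of the non-negative self-adjoint operator $(\mathcal{R}_w^\nu)^*\mathcal{R}_w^\nu$, with eigenvalues $|c^{\nu,\alpha,\beta}_{m,n}(w)|^2$. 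Since $\{\psi^\nu_{m,n}\}$ is a complete orthonormal system, these are all the eigenvalues, and the singular values of $\mathcal{R}_w^\nu$ are $s^{\nu,\alpha,\beta}_{m,n}(w)=|c^{\nu,\alpha,\beta}_{m,n}(w)|$. Substituting the explicit normalizations of $\psi^\nu_{m,n}(w)$ from \eqref{basisL2c} and of $\gamma^{\alpha,\beta}_{m,n}$ from Section 2, the factors of $m!\,n!$ cancel and one recovers the closed form \eqref{explicitsinfval}.

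For the Schatten claim, the plan is to estimate $s^{\nu,\alpha,\beta}_{m,n}(w)$ using a crude upper bound on $|H^\nu_{m,n}(w,\bw)|$ — specifically, something of the form $|H^\nu_{m,n}(w,\bw)|^2 \leq \nu^{m+n} m!\,n!\,e^{\nu|w|^2}$, which follows from Cauchy--Schwarz applied to the Rodrigues-type formula \eqref{gchpmu} (or equivalently from the diagonal of the Mehler kernel \eqref{Mehler20}). Plugging into \eqref{explicitsinfval} yields the bound
\begin{align*}
(s^{\nu,\alpha,\beta}_{m,n}(w))^p \leq \pi^p e^{p\nu|w|^2/2} \bigl(\Gamma(\alpha+1)\Gamma(\beta+1)\bigr)^{p/2} \frac{(m!\,n!)^{p/2}}{\bigl(\Gamma(m+\alpha+2)\Gamma(n+\beta+2)\bigr)^{p/2}},
\end{align*}
and Stirling then reduces the general term to the order of $m^{-p(\alpha+1)/2} n^{-p(\beta+1)/2}$. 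Summability of $\sum_{m,n}(s_{m,n})^p$ then follows exactly when both exponents exceed $1$, i.e. $p>\max(2/(\alpha+1),2/(\beta+1))$.

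The only delicate point I anticipate is the pointwise bound on $|H^\nu_{m,n}(w,\bw)|$, since the naive power-series estimate loses the factorial growth one wants. The clean route is to interpret $\bigl(\nu^{m+n}m!\,n!\bigr)^{-1}|H^\nu_{m,n}(w,\bw)|^2$ as a non-negative diagonal term in the Mehler expansion \eqref{Mehler20} and bound it by the full sum at $u=v=t$ for $t<1$ followed by taking $t\to 1^-$ carefully, or alternatively to apply Cauchy--Schwarz to the reproducing property of $H^\nu_{m,n}$ in $L^2_\nu(\C)$; beyond that, everything reduces to substitution and Stirling's formula.
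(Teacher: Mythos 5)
Your proposal is correct and follows essentially the same route as the paper: identify $\{\psi^\nu_{m,n}\}$ as eigenfunctions of $(\mathcal{R}_w^{\nu})^*\mathcal{R}_w^{\nu}$ with eigenvalues $|c^{\nu,\alpha,\beta}_{m,n}(w)|^2$, read off the singular values, and reduce the $p$-Schatten claim to the summability of $\bigl(m!\,n!/\Gamma(m+\alpha+2)\Gamma(n+\beta+2)\bigr)^{p/2}\sim m^{-p(\alpha+1)/2}n^{-p(\beta+1)/2}$ via the Cram\'er-type bound $|H^\nu_{m,n}(w,\bw)|^2\leq \nu^{m+n}m!\,n!\,e^{\nu|w|^2}$. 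You are in fact more explicit than the paper about justifying that pointwise bound (which the paper uses without proof as $|c^{\nu,\alpha,\beta}_{m,n}(w)|^2\leq e^{\nu|w|^2}\gamma^{\alpha,\beta}_{m,n}$); of the two routes you sketch, the Cauchy--Schwarz/reproducing-property one is the clean choice, since the Mehler diagonal at fixed $t<1$ alone leaves an exponential factor $t^{-(m+n)}$ that would spoil the Schatten estimate.
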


\begin{remark}
 $\mathcal{R}_{w}^{\nu}$ is not a trace class operator if  
$\alpha\leq 1$ or $\beta\leq 1$. However, it is always a Hilbert--Schmidt operator for $\alpha,\beta>0$. 
\end{remark} 


\section{Connection to generalized Fractional Hankel transform}
The transform  $ \psi \longmapsto \mathcal{R}_{w}^{\nu}\psi(u,v)$, for fixed $u,v\in D$,  seen as function in the variable $w$, is exactly the non trivial $2$d fractional Fourier transform considered by Zayed in \cite{Zayed2018}, to wit
\begin{align*}
\mathcal{F}^{\nu}_{u,v}\psi (\xi)
= \int_{\C} \psi(\zeta) K^\nu_{u,v}( \zeta ; \xi) e^{-\nu |\zeta|^2} d\lambda(\zeta)  = \mathcal{R}_{\xi}^{\nu}\psi (u,v),
\end{align*}
where $K^\nu_{u,v}( \zeta ; \xi )$ is as in \eqref{2dFrFTKernel2}.
The eigenfunctions of $\mathcal{F}^{\nu}_{u,v}$ are the It\^o--Hermite polynomials, 
$$ \mathcal{F}^{\nu}_{u,v}  \psi^\nu_{m,n}= u^mv^n \psi^\nu_{m,n}.$$

The connection of fractional Fourier coefficients of given  $f\in L^2_{\nu}(\C)$ to its fractional Fourier transform $\mathcal{F}^{\nu}_{u,v}f$  is given in Theorem \ref{MThm3}   by means of the fractional Hankel transform in \eqref{GFrHT}
which is a specific  generalization of the classical Hankel transform \cite{}.
 To this end, we begin by
interpreting $\mathcal{H}_{u,v}^{\nu,\alpha}$  for integer order $\alpha=k$ as
the radial part of the $2$d Fourier transform $\mathcal{F}^{\nu}_{u,v}$ of $k$-rotationally symmetric function.

\begin{proposition}\label{propFHH} For every rotational  $\psi_k(\zeta) = \Psi(|\zeta|) e^{ik\theta}$, we have 
		\begin{align*}
	\mathcal{F}^{\nu}_{u,v}\psi_k (\xi) = \left( \frac{\xi}{\bxi}\right) ^{k/2} \mathcal{H}_{u,v}^{\nu,\alpha}(\Psi)(|\xi|)  .
	\end{align*}
\end{proposition}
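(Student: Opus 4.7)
The plan is a separation-of-variables computation: pass $\zeta$ and $\xi$ to polar form, isolate the angular integral, and evaluate it via the generating function for modified Bessel functions. Writing $\zeta = re^{i\theta}$ and $\xi = \rho e^{i\phi}$, the cross terms in the exponent of $K^\nu_{u,v}(\zeta;\xi)$ collapse to $u\bzeta\xi + v\zeta\bxi = ur\rho\, e^{i(\phi-\theta)} + vr\rho\, e^{-i(\phi-\theta)}$, while $|\zeta|^2+|\xi|^2$ depends only on $r,\rho$. Substituting $\psi_k(\zeta) = \Psi(r) e^{ik\theta}$ and performing the change of variable $\theta' = \phi - \theta$ extracts a factor $e^{ik\phi}$ and leaves the inner angular integral
\begin{equation*}
I(r,\rho) := \int_0^{2\pi} e^{-ik\theta'} \exp\bigl(a\, e^{i\theta'} + b\, e^{-i\theta'}\bigr)\, d\theta',
\quad a = \tfrac{\nu u r\rho}{1-uv},\ b = \tfrac{\nu v r\rho}{1-uv}.
\end{equation*}

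The computational heart of the proof is the evaluation of $I(r,\rho)$ via the modified-Bessel generating function $\exp\bigl(\tfrac{z}{2}(t+1/t)\bigr) = \sum_{n\in\Z} I_n(z)\, t^n$: choosing $t = \sqrt{a/b}\, e^{i\theta'}$ and $z = 2\sqrt{ab}$ and integrating term-by-term against $e^{-ik\theta'}$ gives $I(r,\rho) = 2\pi (a/b)^{k/2} I_k(2\sqrt{ab}) = 2\pi (u/v)^{k/2} I_k\!\bigl(\tfrac{2\nu\sqrt{uv}\,r\rho}{1-uv}\bigr)$. Combining this with the angle-independent exponentials from both the kernel and the measure, the algebraic simplification $-\nu r^2 - \nu uv r^2/(1-uv) = -\nu r^2/(1-uv)$ consolidates the $r^2$ terms so that the surviving radial integral is exactly the defining expression of $\mathcal{H}_{u,v}^{\nu,k}(\Psi)(\rho)$ from \eqref{GFrHT}, with the prefactor $\tfrac{2\nu}{1-uv}(u/v)^{k/2}$ matching as well.

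Finally, since $\xi/\bxi = e^{2i\phi}$ one has $e^{ik\phi} = (\xi/\bxi)^{k/2}$, yielding the claimed identity (with $\alpha = k$, which is the intended reading of the statement). The only substantive step is the Bessel evaluation of $I(r,\rho)$; all else is bookkeeping of Gaussian exponents. A mild branch choice in $(u/v)^{k/2}$ for complex $u,v\in D$ is of no consequence for integer $k$, provided the same branch convention is used on both sides of \eqref{GFrHT}, and Fubini is harmless because $\Psi$ is the radial part of $\psi_k\in L^2_{\nu}(\C)$ so the absolute integrand is dominated by the well-known Gaussian bound on $|K^\nu_{u,v}|$ for $u,v\in D$.
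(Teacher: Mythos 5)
Your proof is correct and follows essentially the same route as the paper: pass to polar coordinates, extract the $e^{ik\phi}=(\xi/\bxi)^{k/2}$ factor from the angular integration, and identify the surviving radial integral with $\mathcal{H}^{\nu,k}_{u,v}(\Psi)(|\xi|)$. The only difference is cosmetic: you evaluate the angular integral in closed form via the generating function $\exp\bigl(\tfrac{z}{2}(t+1/t)\bigr)=\sum_{n\in\Z}I_n(z)t^n$, whereas the paper expands the kernel's exponential as a double power series in $\zeta,\bzeta$, integrates term by term against $e^{ik\theta}$, and resums the surviving single series into the series definition \eqref{BesselFct} of $I_k$ --- the same identity read in opposite directions.
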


\begin{proof}
By expanding the kernel function $K^\nu_{u,v}( \zeta ; \xi) e^{-\nu |\zeta|^2}$ in power series as 
	\begin{align*}
 K^\nu_{u,v}( \zeta ; \xi) e^{-\nu |\zeta|^2} 
 =
\exp\left(\frac{-\nu (|\zeta|^2+ uv|\xi|^2) }{1-uv}    \right)
\sum_{m,n=0}^\infty \frac{\nu^{m+n} (u\xi)^m (v\bxi)^n}{(1-uv)^{m+n}}    \frac{\zeta^n\bzeta^m}{m!n!} . 
\end{align*}
and using polar coordinates we see that $\mathcal{F}^{\nu}_{u,v}\psi (\xi)$ takes the form 
\begin{align*}
 \frac{\ell_{u,v} }{\pi} 
\sum_{m,n=0}^\infty    
\int_0^{2\pi}\int_0^\infty \psi(re^{i\theta})    
e^{i(n-m)\theta}\frac{(u \xi)^m (v\bxi)^n(\ell_{u,v}r)^{m+n}}{m!n!} e^{-\ell_{u,v} (r^2 + uv |\xi|^2)} 
rdrd\theta 
\end{align*}
where $\ell_{u,v}$ stands for $\ell_{u,v}:= {\nu}/{(1-uv)}$. 
Therefore, for every rotational symmetric function  $\psi_k(\zeta) = \Psi(|\zeta|) e^{ik\theta}$, it reduces further to
\begin{align*}
 2 \ell_{u,v}  
\left( \frac{ u\xi}{v\bxi}\right) ^{k/2}  
\int_0^\infty r\Psi(r)  \left(\sum_{m=0}^\infty 
\frac{(-i)^{2m+k}}{m!(k+m)!}  \left(i\ell_{u,v} \sqrt{uv} |\xi| r  \right)^{2m+k}\right) e^{-\ell_{u,v}(r^2  +uv |\xi|^2) } dr.
\end{align*}
Hence, by means of \cite[p.222]{AndrewsAskeyRoy1999} 
\begin{align}\label{BesselFct} I_\alpha (\xi) :=    \sum_{n=0}^{\infty} \frac{1}{n! \Gamma(\alpha+n+1)} \left(  \frac{\xi}{2}\right)^{2n+\alpha}
\end{align}
and $I_\alpha (-\xi) = (-1)^\alpha I_\alpha (\xi) $, it follows 
\begin{align*}
\mathcal{F}^{\nu}_{u,v}\psi_k (\xi)
&=  2\ell_{u,v}   
\left( \frac{u\xi}{  v\bxi}\right) ^{k/2}  \int_0^\infty r\Psi(r) 
I_{k} \left( \frac{2\nu \sqrt{uv }}{1-uv}|\xi| r \right)  e^{-\ell_{u,v} (r^2  +uv |\xi|^2) } dr .
\end{align*}
\end{proof}

\begin{proof}[Proof of Theorem \ref{MThm3}]
 Notice first that for arbitrary $f\in L^2_\nu(\C)$, we have 
 $$f(r e^{i\theta}) =\sum\limits_{k\in \mathbb{Z}}g_k(r)e^{in\theta}.$$
  Therefore, by setting $\xi= \rho e^{i\varphi}$  and making appeal of Proposition \ref{propFHH} we get  
\begin{align*}
 \mathcal{F}^{\nu}_{u,v}f(\xi) 
= \sum\limits_{k\in \mathbb{Z}}   \left( \frac{\xi}{\bxi}\right) ^{k/2} \mathcal{H}_{u,v}^{\nu,\alpha}(g_k)(\rho)
= \sum\limits_{k\in \mathbb{Z}}    \mathcal{H}_{u,v}^{\nu,\alpha}(g_k)(\rho) e^{ik\varphi}.
\end{align*}
Accordingly, by identification to
$\mathcal{F}^{\nu}_{u,v}f (\rho e^{i\varphi})=\sum\limits_{k\in \mathbb{Z}}G_k(\rho)e^{ik\varphi} $, we see that the Fourier coefficients
$G_k(\rho)$ and $g_k(r)$ 
of $\theta \longmapsto \mathcal{F}^{\nu}_{u,v}f(\rho e^{i\theta})$ and $\theta \longmapsto f(r^{i\theta})$, respectively, satisfy 
%
%
\begin{align*}
G_k(\rho)
=\mathcal{H}_{u,v}^{\nu,\alpha}(g_k)(\rho).
\end{align*}
\end{proof}

\end{document}